\newcommand{\EGC}{Erd\H{o}s-Gy\'{a}rf\'{a}s conjecture}
\newcommand{\EG}{Erd\H{o}s and Gy\'{a}rf\'{a}s}
\newcommand{\E}{Erd\H{o}s}
\newcommand{\TRUE}{\texttt{True}}
\newcommand{\FALSE}{\texttt{False}}
\begin{document}
\title{Erd\H{o}s-Gy\'{a}rf\'{a}s conjecture on graphs without long induced paths\thanks{A major part of this work was done when the first and the last authors were at IIT Dharwad. This project is supported by ANRF (SERB) MATRICS grant MTR/2022/000692.}}
%
%\titlerunning{Abbreviated paper title}
% If the paper title is too long for the running head, you can set
% an abbreviated paper title here
%
\author{Anand Shripad Hegde\inst{1,2} \and
R. B. Sandeep\inst{1} \and
P. Shashank\inst{1,2}}
\authorrunning{A. Hegde et al.}
% First names are abbreviated in the running head.
% If there are more than two authors, 'et al.' is used.
%
\institute{Department of Computer Science and Engineering, Indian Institute of Technology Dharwad, Dharwad, India \\
\email{\{cs20bt058.alum24, sandeeprb, cs20bt048.alum24\}@iitdh.ac.in} \\ \and
Arista Networks, Bengaluru, India}
\maketitle              % typeset the header of the contribution
\begin{abstract}
Erd\H{o}s and Gy\'{a}rf\'{a}s conjectured in 1994 that every graph with minimum degree at least $3$ has a cycle of length a power of $2$. In 2022, Gao and Shan (Graphs and Combinatorics) proved that the conjecture is true for $P_8$-free graphs, i.e., graphs without any induced copies of a path on $8$ vertices. In 2024, Hu and Shen (Discrete Mathematics) 
improved this result by proving that the conjecture is true for $P_{10}$-free graphs. With the aid of a computer search, we improve this further by proving that the conjecture is true for $P_{13}$-free graphs.

\keywords{Erd\H{o}s-Gy\'{a}rf\'{a}s conjecture  \and $P_k$-free graphs \and Computer-aided proof.}
\end{abstract}
In 1994, \EG~\cite{DBLP:journals/dm/Erdos97b}\footnote{In a paper published in 1997 \cite{DBLP:journals/dm/Erdos97b}, \E\ wrote: ``About three years ago, Gy\'{a}rf\'{a}s and I thought at first that if $G$ is any graph every vertex of which has degree $\geq 3$ then our $G$ has a cycle of length $2^k$ for some $k$. We are convinced now that this is false and no doubt there are graphs for every $r$ every vertex of which has degree $\geq r$ and which contains no cycle of length $2^k$, but we never found a counterexample even for $r=3$.''}
%He proceeded with writing ``We are convinced now that this is false...but we never found a counterexample''.} 
conjectured that  every graph with minimum degree at least $3$ has a cycle of length a power of $2$. The conjecture has been verified for the following classes of graphs - $3$-connected cubic planar graphs~\cite{DBLP:journals/combinatorics/HeckmanK13}, claw-free planar graphs~\cite{daniel2001result}, $K_{1,m}$-free graphs with some additional degree constraints~\cite{shauger1998results}, and some families of Cayley graphs~\cite{ghaffari2018erdHos}. 
Liu and Montgomery~\cite{liu2023solution} proved that there exists a large constant such that every graph with average degree at least the constant has a cycle with length a power of 2. 
This disproved \E ' later conviction that the conjecture is false for every minimum degree at least 3~\cite{DBLP:journals/dm/Erdos97b}. 
Extensive computer searches have been done to show that a counterexample has at least $17$ vertices, a cubic counterexample has at least $30$ vertices~\cite{markstrom2002extremal}, and a bipartite counterexample has at least $30$ vertices~\cite{nowbandegani2011experimental}.

In 2022, Gao and Shan~\cite{DBLP:journals/gc/GaoS22} proved that the conjecture is true for $P_8$-free graphs, i.e., graphs without any induced copies of a path on $8$ vertices. Very recently, Hu and Shen~\cite{DBLP:journals/dm/HuS24} proved that the conjecture is true for $P_{10}$-free graphs. Inspired by a case analysis in \cite{DBLP:journals/gc/GaoS22}, we prove with the aid of a computer search that the conjecture is true for $P_{13}$-free graphs. 
We prove a stronger statement for $P_{12}$-free graphs that every $P_{12}$-free graphs with minimum degree at least 3 has a 4-cycle or an 8-cycle. This improves upon the similar result obtained for $P_{10}$-free graphs~\cite{DBLP:journals/dm/HuS24}.

The key backtracking algorithm that we implemented is shown in Figure~\ref{algo}. The implementation is available at ~\cite{Hegde_Verifier_for_ErdosGyarfas_2024}. The algorithm \textsc{explore} takes two inputs: a graph $G$ and an integer $k\geq 3$. The vertices of $G$ are assumed to be $v_0, v_1, \ldots, v_{n-1}$. It outputs \texttt{False}, 
if $G$ can be \textit{extended} to a $P_k$-free counterexample, i.e., if there exists a $P_k$-free counterexample $G^*$ to \EGC\ with a vertex labeling $v_0,v_1,\ldots, v_{n^*-1}$ (where $n^*\geq n$) such that $\{v_i,v_j\}$, for $0\leq i<j\leq n-2$, is an edge in $G$ if and only if it is an edge in $G^*$, and the set of neighbors of $v_{n-1}$ in $G$ is a subset of 
its neighbors in $G^*$. The algorithm returns \texttt{True} otherwise. We say that a subset $S$ of $\{v_0,v_1,\ldots,v_{n-2}\}$ is a set of potential safe neighbors of $v_{n-1}$ if $\{v_i, v_{n-1}\}$ is not an edge (for $v_i\in S$) in $G$ and making all vertices in $S$ neighbors of $v_{n-1}$ 
does not create any forbidden cycles (a cycle is forbidden if it is of length a power of $2$). For each such set $S$ of safe potential neighbors of $v_{n-1}$, we add the corresponding edges, and then check whether the new graph is a counterexample or not. If a $P_k$-free counterexample is found, we return \texttt{False}. If the updated graph $G$ is 
not a counterexample and it is $P_k$-free, then we find a vertex, anchor\_node with degree less than $3$, and make it adjacent to a new vertex $v_n$. We call \textsc{explore} with this updated graph $G$ and $k$ and return the result if it is \texttt{False}. If the recursive calls return \texttt{True} for each set of potential safe neighbors, then 
return \texttt{True} at the end. We run \textsc{explore} with $P_k, k$ for $k\geq 3$. 
If we obtain that the function returns \textsc{True} for every $3\leq k\leq t$, then we claim that the conjecture is true for $P_{t}$-free graphs.

The function \textsc{get\_largest\_low\_degree\_vertex} returns the vertex with largest index having a degree less than $3$. It returns $-1$ if the minimum degree is at least $3$. By $G[A]$, where $A$ is a subset of vertices of $G$, we denote the graph induced by $A$ in $G$. If $A = \{u_1,u_2,\ldots,u_p\}$, then $G[u_1,u_2,\ldots,u_p]$ denotes the graph $G[A]$. 
A graph is a minimal counterexample, if none of its proper induced subgraphs are counterexamples.

\begin{figure}
    \caption{The algorithm}
    \label{algo}
    \begin{algorithmic}[1]
    \Function{explore}{$G, k$}
    \Comment{$V(G) = \{v_0, v_1,\ldots,v_{n-1}\}$}
      \For{each set $S$ of potential safe neighbors of $v_{n-1}$}
        \State Add edge $\{v_i, v_{n-1}\}$ to $G$, for each $v_i\in S$.
        \If{there is an induced $P_k$ in $G$}
          \State continue
        \EndIf
        \State anchor\_node $\gets$ \Call{get\_largest\_low\_degree\_vertex}{$G$}
        \If{anchor\_node $==$ -1}
          \State return \FALSE \Comment{G is a counterexample.}
        \EndIf
        \State Add a node $v_n$ and the edge $\{v_n, anchor\_node\}$ to $G$
        \If{not \Call{explore}{$G,k$}}
          \State return \FALSE
        \EndIf
        \State Remove node $v_n$ from $G$.
        \State Remove edge $\{v_i, v_{n-1}\}$ from $G$, for each $v_i\in S$.
      \EndFor
      \State return \TRUE
    \EndFunction
    \end{algorithmic}
\end{figure}

\begin{lemma}
\label{lem}
    Let $G^*$ be a minimal counterexample to \EGC, such that $G^*$ is $P_k$-free for an integer $k\geq 3$. Let $\{v_0,v_1,\ldots,v_{n^*-1}\}$ be the set of vertices of $G^*$. Let $G$ be a graph with the vertex set $\{v_0,v_1,\ldots,v_{n-1}\}$, where $3\leq n\leq n^*$, such that the following conditions are satisfied.
    \begin{enumerate}[i.]
        \item The pair $\{v_i,v_j\}$, for $0\leq i < j \leq n-2$,
        is an edge in $G^*$ if and only if it an edge in $G$.
        \item If $v_i$, for $0\leq i\leq n-2$, is a neighbor of $v_{n-1}$ in $G$, then so is in $G^*$. 
    \end{enumerate}
    Then \textsc{explore($G,k$)} returns \texttt{False}.
\end{lemma}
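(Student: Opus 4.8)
The plan is to argue by induction on the gap $n^\ast-n$, at each stage singling out one particular set of potential safe neighbours that forces \textsc{explore} down the branch corresponding to $G^\ast$ itself. Writing $N(\cdot)$ for neighbourhoods, set
\[
S^\ast \;=\; \bigl(N_{G^\ast}(v_{n-1})\cap\{v_0,\dots,v_{n-2}\}\bigr)\setminus N_G(v_{n-1}).
\]
First I would check that $S^\ast$ is a legal set of potential safe neighbours of $v_{n-1}$: by condition (ii) no vertex of $S^\ast$ is already adjacent to $v_{n-1}$ in $G$, and after inserting the edges $\{v_i,v_{n-1}\}$ for $v_i\in S^\ast$ the graph on $\{v_0,\dots,v_{n-1}\}$ becomes exactly $G^\ast[v_0,\dots,v_{n-1}]$ by condition (i); being an induced subgraph of the counterexample $G^\ast$, it contains no cycle whose length is a power of $2$, so no forbidden cycle is created. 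Since the \textbf{for} loop returns \FALSE\ the moment any one set of potential safe neighbours leads to \FALSE, and returns \TRUE\ only after all sets have been processed, it suffices to show that the iteration handling $S^\ast$ returns \FALSE; any earlier iteration returning \FALSE\ only helps, and the per-iteration cleanup (removing the added node and edges whenever a recursive call returns \TRUE) guarantees that $G$ is restored to its original state before the $S^\ast$ iteration is reached.

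After the edges of $S^\ast$ are inserted, $G=G^\ast[v_0,\dots,v_{n-1}]$ is an induced subgraph of the $P_k$-free graph $G^\ast$, hence has no induced $P_k$, so the algorithm does not execute \textbf{continue}. In the base case $n=n^\ast$ we have $G=G^\ast$, which has minimum degree at least $3$; thus \textsc{get\_largest\_low\_degree\_vertex} returns $-1$ and the algorithm returns \FALSE, as required. For the inductive step $n<n^\ast$, the graph $G=G^\ast[v_0,\dots,v_{n-1}]$ is a \emph{proper} induced subgraph of the minimal counterexample $G^\ast$, so it is not itself a counterexample; as it has no forbidden cycle, it must fail the degree condition, and hence possesses a vertex of degree less than $3$. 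Consequently anchor\_node is some vertex $v_j\neq-1$, a new vertex $v_n$ together with the edge $\{v_n,v_j\}$ is added to produce a graph $G'$ on $\{v_0,\dots,v_n\}$, and the recursive call \textsc{explore}$(G',k)$ is made.

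To close the induction I would apply the lemma to $G'$, but this requires a relabelling of $G^\ast$, which I expect to be the main obstacle: $v_j$ need not be adjacent in $G^\ast$ to the particular vertex the algorithm has just named $v_n$. However, $v_j$ has degree at least $3$ in $G^\ast$ while having degree less than $3$ in $G^\ast[v_0,\dots,v_{n-1}]$, so it has a neighbour $w$ in $G^\ast$ with index at least $n$. Permuting the labels of $\{v_n,\dots,v_{n^\ast-1}\}$ so that $w$ receives the label $v_n$ leaves $G^\ast[v_0,\dots,v_{n-1}]$ untouched and makes the new $v_n$ a genuine $G^\ast$-neighbour of $v_j$. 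With this relabelling, $G'$ and $G^\ast$ satisfy (i) and (ii) with $n$ replaced by $n+1$ (condition (i) because the edges among $\{v_0,\dots,v_{n-1}\}$ are unchanged, and condition (ii) because the sole neighbour $v_j$ of $v_n$ in $G'$ is now a neighbour of $v_n$ in $G^\ast$), and $3\le n+1\le n^\ast$. Since the gap $n^\ast-(n+1)$ is strictly smaller, the induction hypothesis yields that \textsc{explore}$(G',k)$ returns \FALSE, so the guard \textbf{if not} \textsc{explore}$(G,k)$ makes the original call return \FALSE. The only genuinely delicate point is verifying that this relabelling can always be performed, i.e.\ that $v_j$ has a neighbour outside $\{v_0,\dots,v_{n-1}\}$, which is exactly where the minimum-degree property of the counterexample $G^\ast$ enters.
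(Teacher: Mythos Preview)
Your argument is correct and follows essentially the same route as the paper: induction on $n^\ast-n$, single out the set $S^\ast$ of missing $G^\ast$-neighbours of $v_{n-1}$, observe that after adding these edges $G$ coincides with $G^\ast[v_0,\dots,v_{n-1}]$, and in the inductive step relabel so that the new vertex $v_n$ is a genuine $G^\ast$-neighbour of the anchor node. You are in fact slightly more careful than the paper in two places---explicitly verifying that $S^\ast$ is a legal set of potential safe neighbours (no forbidden cycle is created) and spelling out why the \textbf{for} loop's cleanup ensures the $S^\ast$ iteration is eventually reached in the correct state---but these are refinements of the same proof rather than a different approach.
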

\begin{proof}
We prove by induction on $n^*-n$. For the base case, assume that $n^*-n=0$. Let $S$ be the set of neighbors in $G^*$ which are not neighbors of $v_{n-1}$ in $G$. This set $S$ is discovered in one iteration of the \texttt{for} loop (line 2). Then $G$ is updated (line 3) with the corresponding edges and the updated graph becomes isomorphic to $G^*$. Since $G^*$ has no induced $P_k$ and has minimum degree at least 3, the \texttt{if} condition in line 4 fails and that in line 8 succeeds and the algorithm returns \texttt{False}. 

Now, assume that $n^*-n > 0$. As before, let $S$ be the set of neighbors among $\{v_0,v_1,\ldots,v_{n-2}\}$ of $v_{n-1}$ in $G^*$ which are not neighbors of $v_{n-1}$ in $G$. This set $S$ is discovered in one iteration of the \texttt{for} loop and the graph $G$ is updated with the corresponding edges. Therefore, $\{v_i,v_j\}$, for $0\leq i < j \leq n-1$,
is an edge in $G^*$ if and only if it an edge in $G$. Since $G^*$ is $P_k$-free and $G$ is an induced subgraph of $G^*$,
the \texttt{if} condition in line 4 fails. Since $G^*$ is a minimal counterexample, $G$ must have a vertex with degree less than 3. Let $i$ be the largest index such that $v_i$ is a vertex of degree at most 2 in $G^*[v_0,v_1,\ldots,v_{n-1}]$. Then $v_i$ is returned as the anchor\_node in line 7. Clearly, 
$v_i$ must have a neighbor $v_j$ in $G^*$ such that $v_j\in \{v_n,v_{n+1},\ldots,v_{n^*-1}\}$.
Without loss of generality, assume that $j = n$. Now, the vertex $v_{n}$ and the edge $\{v_{n}, \text{anchor\_node}\}$ are added to $G$ (line 11). Hence the number of vertices in $G$ got incremented by one. Further, $G$ trivially satisfies both the conditions in the statement of the lemma. Now the proof follows from the induction hypothesis.
\end{proof}

Corollary~\ref{cor} follows from Lemma~\ref{lem}.

\begin{corollary}
\label{cor}
Let $G^*$ be a minimal counterexample to \EGC\ and let $k$ be the smallest integer such that $G^*$ is $P_k$-free but has an
induced $P_{k-1}$. Let $G$ be the path $v_0v_1\ldots v_{k-1}$. 
Then \textsc{explore}($G, k$) returns \texttt{False}. 
\end{corollary}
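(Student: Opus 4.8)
The plan is to reduce the corollary to a single application of Lemma~\ref{lem} with $n = k$. For this I must exhibit a vertex labeling $v_0, v_1, \ldots, v_{n^*-1}$ of $G^*$ under which the path $G = v_0 v_1 \cdots v_{k-1}$ satisfies conditions (i) and (ii). The guiding observation is that condition (i) constrains only the edges among $v_0, \ldots, v_{k-2}$, and in the path $G$ these are exactly the $k-2$ consecutive edges, i.e.\ an induced $P_{k-1}$; while condition (ii) demands only that the unique $G$-neighbor $v_{k-2}$ of $v_{k-1}$ is also a neighbor of $v_{k-1}$ in $G^*$. So the task is purely to realize the required path as the ``prefix'' of a suitable labeling of $G^*$.

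First I would fix the prefix. By the choice of $k$, the graph $G^*$ contains an induced copy of $P_{k-1}$; I label its vertices $v_0, v_1, \ldots, v_{k-2}$ in path order. Since the path is induced, the only edges of $G^*$ among $\{v_0, \ldots, v_{k-2}\}$ are the consecutive ones $\{v_0,v_1\}, \ldots, \{v_{k-3}, v_{k-2}\}$, which coincide with the edges of $G$ on these vertices; this gives condition (i). Next I would choose $v_{k-1}$: within the induced path the endpoint $v_{k-2}$ is adjacent only to $v_{k-3}$, but $G^*$ has minimum degree at least $3$, so $v_{k-2}$ has a neighbor outside $\{v_0, \ldots, v_{k-2}\}$; I take one such vertex to be $v_{k-1}$ and label the remaining vertices of $G^*$ arbitrarily as $v_k, \ldots, v_{n^*-1}$. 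In $G$ the vertex $v_{k-1}$ has the single neighbor $v_{k-2}$, and $\{v_{k-2}, v_{k-1}\}$ is an edge of $G^*$ by construction, so condition (ii) holds. I would then check the range $3 \le n \le n^*$: since $G^*$ has an edge it is not $P_2$-free, whence $k \ge 3$, i.e.\ $n = k \ge 3$; and the $k$ labeled vertices are distinct, so $n^* \ge k = n$. With all hypotheses met, Lemma~\ref{lem} yields that \textsc{explore}$(G, k)$ returns \FALSE.

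The argument is essentially bookkeeping, so there is no deep obstacle; the only point that needs care is the choice of $v_{k-1}$, which must be a vertex distinct from $v_0, \ldots, v_{k-2}$ that is genuinely adjacent to $v_{k-2}$. This is precisely where the minimum-degree-$3$ hypothesis is used: in an induced path the endpoint $v_{k-2}$ has exactly one neighbor ($v_{k-3}$) on the path, and therefore at least two neighbors off the path, any one of which may serve as $v_{k-1}$. A secondary point worth confirming is that relying on condition (ii) alone—rather than a full ``if and only if'' on the neighborhood of $v_{k-1}$—is legitimate; it is, because the lemma imposes only a one-directional requirement on $v_{n-1}$, so any additional neighbors of $v_{k-1}$ in $G^*$ among $v_0, \ldots, v_{k-3}$ are harmless.
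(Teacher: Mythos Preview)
Your proposal is correct and follows exactly the approach the paper intends: the paper simply states that Corollary~\ref{cor} follows from Lemma~\ref{lem}, and your argument supplies precisely the verification that the hypotheses of the lemma are met with $n=k$, via an induced $P_{k-1}$ in $G^*$ extended by one off-path neighbor of its endpoint.
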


\begin{theorem}
\label{thm}
Every $P_{13}$-free graph with minimum degree at least $3$ has a cycle of length a power of $2$.
\end{theorem}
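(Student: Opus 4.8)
The plan is to argue by contradiction and to let Corollary~\ref{cor} carry all the structural weight, so that the theorem reduces to a finite computation. Suppose, for contradiction, that some $P_{13}$-free graph $H$ has minimum degree at least $3$ yet contains no cycle whose length is a power of $2$; that is, $H$ is a counterexample to the \EGC. Among all induced subgraphs of $H$ that are themselves counterexamples---a nonempty family, since $H$ qualifies---I would pick one, $G^*$, with the fewest vertices. By this minimal choice no proper induced subgraph of $G^*$ is a counterexample, so $G^*$ is a minimal counterexample in the sense defined above; and since an induced subgraph of a $P_{13}$-free graph is again $P_{13}$-free, $G^*$ is $P_{13}$-free.

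Next I would isolate the relevant path length. Let $k$ be the smallest integer for which $G^*$ is $P_k$-free. As $G^*$ is $P_{13}$-free we have $k\le 13$, and as $G^*$ has minimum degree $3$ it contains an edge, hence an induced $P_2$, which forces $k\ge 3$. By minimality of $k$, $G^*$ has an induced $P_{k-1}$ but no induced $P_k$, so the hypotheses of Corollary~\ref{cor} hold for this $k$. Taking $G$ to be the path $v_0v_1\ldots v_{k-1}$, the corollary yields that \textsc{explore}$(G,k)$ returns \texttt{False}.

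For the computational half I would run \textsc{explore}$(P_j,j)$---with $G$ the path on $j$ vertices and parameter $j$---for every $j$ with $3\le j\le 13$, and record that each call terminates and returns \texttt{True} (the code is available at~\cite{Hegde_Verifier_for_ErdosGyarfas_2024}). Since \textsc{explore} is deterministic, its run on the fixed input $(P_k,k)$ cannot return both \texttt{True} and \texttt{False}. This is the desired contradiction: it refutes the existence of $G^*$, hence of $H$, and proves the theorem.

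I expect essentially all the difficulty to sit on the computational side, the logical skeleton above being short once Corollary~\ref{cor} is granted. Two points deserve scrutiny. The first is completeness of the enumeration: for the true parameter $k$, one must be certain that the neighbor set actually realized in $G^*$ appears among the sets $S$ swept by the \texttt{for} loop, and that no cycle of length a power of $2$ is introduced unnoticed---this is precisely what the proofs of Lemma~\ref{lem} and Corollary~\ref{cor} certify, so here the task is to trust (and audit) that reduction. The second, and harder, point is termination together with the size of the backtracking tree: the recursion adds one vertex per level, so every branch must close. Several features pull toward finiteness---$P_k$-freeness caps the diameter of the connected graph $G^*$ at $k-2$, forbidding $4$-cycles (indeed all powers of $2$) severely restricts how densely the graph can close up, and the minimum-degree-$3$ target bounds how long a vertex can remain unsaturated---but distilling these into a clean a priori bound on $|V(G^*)|$ is delicate, so in practice it is the halting of the search for each $j\le 13$ that supplies the guarantee. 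The same machinery, restricted so that only $4$- and $8$-cycles are treated as forbidden, is what I would use to obtain the stronger conclusion announced for $P_{12}$-free graphs.
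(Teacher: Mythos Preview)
Your proposal is correct and follows essentially the same approach as the paper: both reduce the theorem to Corollary~\ref{cor}, run \textsc{explore} on the path $v_0v_1\ldots v_{k-1}$ for each $3\le k\le 13$, and observe that the returned \texttt{True} contradicts the \texttt{False} predicted by the corollary for any putative minimal counterexample. You spell out the passage to a minimal counterexample and the bound $3\le k\le 13$ more explicitly than the paper (which dispatches the lower end with the remark that $P_2$-free graphs have no edges), but the logical structure is identical.
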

\begin{proof}
    We implemented \textsc{explore} and ran it with $G, k$, where 
    $G$ is the path $v_0v_1\ldots v_{k-1}$, for $k$ from $3$ to $13$.
    In all the executions, the program returned \texttt{True}. Now, the statement follows from Corollary~\ref{cor} and the fact that $P_2$-free graphs have no edges.
\end{proof}

We have a different implementation of the algorithm in which only cycles of lengths 4 and 8 are forbidden (see the branch `4-8-cycles' in the repository~\cite{Hegde_Verifier_for_ErdosGyarfas_2024}). Using this, we obtain a stronger result for $P_{12}$-free graphs.
\begin{theorem}
    \label{thm:4-8}
    Every $P_{12}$-free graph with minium degree at least $3$ has a 4-cycle or an 8-cycle.
\end{theorem}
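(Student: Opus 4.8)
The plan is to mirror the proof of \Cref{thm}, replacing the family of forbidden cycles throughout by the two lengths $4$ and $8$. Concretely, I redefine a cycle to be \emph{forbidden} if it has length $4$ or $8$, and I call a graph a \emph{counterexample} if it has minimum degree at least $3$ yet contains neither a $4$-cycle nor an $8$-cycle. With these two definitions in force, the modified routine (the \texttt{4-8-cycles} branch of~\cite{Hegde_Verifier_for_ErdosGyarfas_2024}) selects its potential safe neighbors exactly as before, except that a candidate set $S$ is admissible precisely when adding the corresponding edges creates no $4$-cycle and no $8$-cycle.

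The first step is to check that \Cref{lem} and \Cref{cor} remain valid under this reinterpretation, with essentially the same proofs. The only facts about ``forbidden cycle'' used in the proof of \Cref{lem} are that (a) passing to an induced subgraph never creates a forbidden cycle, and (b) a minimal counterexample has the property that each of its proper induced subgraphs contains a vertex of degree less than $3$. Both remain true: a $4$-cycle or an $8$-cycle cannot appear by deleting vertices, so if a proper induced subgraph had minimum degree at least $3$ it would itself be a counterexample, contradicting minimality. In the base case one still observes that the true neighbor set of $v_{n-1}$ in $G^*$, when added to the induced subgraph $G$, produces a graph isomorphic to an induced subgraph of $G^*$ and hence introduces no $4$-cycle or $8$-cycle; thus this set is discovered as one admissible choice of $S$, and the induction on $n^*-n$ proceeds verbatim.

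Granting the adapted corollary, the proof concludes exactly as for \Cref{thm}. Suppose for contradiction that $G^*$ is a $P_{12}$-free minimal counterexample in the new sense, and let $k\le 12$ be the smallest integer for which $G^*$ is $P_k$-free while still containing an induced $P_{k-1}$; such a $k$ exists and satisfies $k\ge 3$, since $G^*$ has minimum degree $3$, hence an edge, hence an induced $P_2$. The adapted \Cref{cor} forces \textsc{explore} on the path $v_0v_1\ldots v_{k-1}$ to return \FALSE. I then run the modified implementation on this path for every $k$ from $3$ to $12$; in each run it returns \TRUE, the desired contradiction. Hence no $P_{12}$-free counterexample exists, which is the statement of the theorem.

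The main obstacle I anticipate is computational rather than structural. Forbidding only two cycle lengths, instead of every power of $2$, renders far more candidate neighbor sets admissible, so the branching factor of \textsc{explore} grows and the backtracking tree is substantially larger than in the $P_{13}$-free search; this is the likely reason the method reaches only $k=12$ here. A secondary point that must be confirmed empirically is termination: although the induction in \Cref{lem} guarantees that a genuine counterexample is detected after finitely many vertex additions, one must rely on the completed computer run to certify that every branch of the search for each $k\le 12$ terminates and returns \TRUE.
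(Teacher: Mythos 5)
Your proposal matches the paper's argument exactly: the paper proves Theorem~\ref{thm:4-8} by rerunning a modified implementation (the \texttt{4-8-cycles} branch) in which only $4$- and $8$-cycles are forbidden, for $k$ from $3$ to $12$, and relying on the same Lemma~\ref{lem}/Corollary~\ref{cor} machinery reinterpreted for this restricted family of forbidden cycles. Your verification that the lemma's proof only uses (a) heredity of the ``no forbidden cycle'' property under induced subgraphs and (b) minimality forcing a low-degree vertex is precisely the justification the paper leaves implicit, so there is nothing to add.
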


This improves upon the result by Hu and Shen~\cite{DBLP:journals/dm/HuS24} that every $P_{10}$-free graph with minimum degree at least 3 has a 4-cycle or an 8-cycle.

We implemented \cite{Hegde_Verifier_for_ErdosGyarfas_2024} the algorithm\footnote{There are slight differences between the algorithm in Figure~\ref{algo} and the implemented algorithm. The differences are trivial and have no implications on the correctness. For example, the implemented algorithm prints a counterexample, if exists, and exits, instead of returning \FALSE. In the implemented algorithm, the arguments passed to \textsc{explore} are $G$ and an anchor\_node, instead of $G$ and $k$. This is because, $k$ is available as part of the graph object and passing anchor\_node helps in efficient execution.} 
in C++. We also have a faster but memory-intensive parallel implementation using Cilk~\cite{blumofe1995cilk} (see `cilk' branch of the repository). The time taken by these implementations when ran on a server ($2.6$ GHz CPU, 72 cores) is shown in Table~\ref{tab:time}. The parallel execution was terminated due to out of memory error when ran for $k = 14$. A serial execution of the modified implementation used for Theorem~\ref{thm:4-8} took 1 minute 39 seconds for $k=11$, and 3 hours 9 minutes for $k=12$. We conducted extensive tests to obtain evidences for the correctness of the implementation. The details are given in the appendix.

\setlength\extrarowheight{2pt}
\begin{table}[h]
    \centering
\begin{tabular}{ >{\hfill}p{3.25cm}|p{1.05cm}p{.85cm}p{.85cm}p{1.5cm}p{1.5cm}}
$k$ & $\leq 9$ & 10 & 11 & 12 & 13\\
\hline
Time taken (C++) & $<$0.2s & 2s & 32s & 31m 32s & 11h 56m \\
%\hline
Time taken (Cilk) & $<$0.05s & 0.1s & 1.4s& 29s & 17m 17s \\
\end{tabular}
\vspace*{5mm}
    \caption{Time taken by an implementation of Algorithm~\ref{algo} for various values of $k$. Seconds, minutes, and hours are represented by `s', `m', and `h' respectively.}
    \label{tab:time}
\end{table}

The technique that we used is not helpful for proving the conjecture for $H$-free graphs when $H$ is not a path. Assume that $H$ has a cycle. Since an infinite tree with minimum degree at least 3 neither has $H$ as an induced subgraph nor has a cycle of length a power of 2 as a subgraph, the algorithm will run for ever. 
If $H$ is a tree but not a path, then we can come up with an $H$-free infinite graph by replacing each vertex of an infinite tree (where every vertex has degree 3) with a clique of 3 vertices and making each such vertex adjacent to exactly one vertex in the clique formed for a neighbor. Since this graph is claw-free, every induced tree in it is a path. We end with the following question: Is Algorithm~\ref{algo} capable of resolving the conjecture for $P_{k}$-free graphs for every integer $k\geq 14$? In other words, is there an infinite graph with minimum degree at least 3 neither having an induced $P_{k}$ nor having a cycle of length a power of 2?

\vspace{\baselineskip}
\noindent
\textbf{Acknowledgement.} We thank Nikhil Hegde for suggesting ways to boost the performance of our code. We thank Paweł Rzążewski for helpful comments on an initial draft of this paper.
\bibliographystyle{plain}
\bibliography{main}

\newpage
\appendix
\section*{Appendix: Correctness testing}
The C++ program implementing the algorithm can be found at \cite{Hegde_Verifier_for_ErdosGyarfas_2024}. 
We conducted various tests to verify the correctness of the implementation. They are listed below. The outcome of the tests were as expected and we believe that it provides strong evidences for the correctness of the implementation.

\begin{enumerate}
    \item The output of our program is in accordance with the result obtained in ~\cite{DBLP:journals/dm/HuS24} for $P_{10}$-free graphs. 
    \item By introducing logs and visualization of intermediate graphs, we manually verified the execution of the program for $3\leq k\leq 5$. The output was as expected. More details can be found in the `logs' branch of the repository.
    \item We conducted unit tests for major functions in the program. For each such functions, we tested with inputs where the corresponding outputs are known to us. More details can be found in the `tests' branch of the repository.
    \item We obtained all the four cubic graphs with minimum number (24) of vertices (found by Markström~~\cite{markstrom2002extremal})  having no 4-cycle and no 8-cycle but having a 16-cycle by guiding the execution with proper values of $k$ and by restricting the degree to be 3. Only one of them, known as Markström graph (see Figure~\ref{fig}), is planar. It is $P_{18}$-free but has an induced $P_{17}$. More details can be found in the `special-graphs' branch of the repository. 
\end{enumerate}
\begin{figure}[b]
\centering
\scalebox{0.9}{
\begin{tikzpicture}[myv/.style={circle, draw, inner sep=1.5pt, fill=black}]

%  \node[myv] (a) at (90:0.5) {};
%  \node[myv] (b) at (90:1) {};
%  \node[myv] (c) at (210:0.5) {};
%  \node[myv] (d) at (210:1) {};
%  \node[myv] (e) at (330:0.5) {};
%  \node[myv] (f) at (330:1) {};

  \node[myv] (a) at (50:1) {};
  \node[myv] (b) at (110:1) {};
  \node[myv] (c) at (170:1) {};
  \node[myv] (d) at (230:1) {};
  \node[myv] (e) at (290:1) {};
  \node[myv] (f) at (350:1) {};

  \node[myv] (s) at (110:2.5) {};
  \node[myv] (o) at (150:2.5) {};
  \node[myv] (p) at (190:2.5) {};
  \node[myv] (t) at (230:2.5) {};
  \node[myv] (q) at (270:2.5) {};
  \node[myv] (r) at (310:2.5) {};
  \node[myv] (u) at (350:2.5) {};
  \node[myv] (m) at (30:2.5) {};
  \node[myv] (n) at (70:2.5) {};

  \node[myv] (g) at (90:1.75) {};
  \node[myv] (h) at (130:1.75) {};
  \node[myv] (w) at (170:1.75) {};
  \node[myv] (i) at (210:1.75) {};
  \node[myv] (j) at (250:1.75) {};
  \node[myv] (x) at (290:1.75) {};
  \node[myv] (k) at (330:1.75) {};
  \node[myv] (l) at (10:1.75) {};
  \node[myv] (v) at (50:1.75) {};

  %\node[myv] (g) at (306:0.75) {};
  %\node[myv] (h) at (306:1.5) {};
  %\node[myv] (i) at (18:0.75) {}; 
  %\node[myv] (j) at (18:1.5) {}; 
  \draw[color=purple,very thick] (a) -- (b);
  \draw[color=purple,very thick] (c) -- (d);
  \draw (e) -- (f);
  
  \draw (a) -- (c);
  \draw[color=purple,very thick] (a) -- (e);
  \draw[color=purple,very thick] (c) -- (e);
  
  \draw (s) -- (o);
  \draw (o) -- (p);
  \draw (p) -- (t);
  \draw[color=purple,very thick] (t) -- (q);
  \draw[color=purple,very thick] (q) -- (r);
  \draw[color=purple,very thick] (r) -- (u);
  \draw[color=purple,very thick] (u) -- (m);
  \draw[color=purple,very thick] (m) -- (n);
  \draw[color=purple,very thick] (n) -- (s);

  \draw (g) -- (h);
  \draw (i) -- (j);
  \draw (k) -- (l);

  \draw (b) -- (h);
  \draw[color=purple,very thick] (b) -- (g);
  \draw[color=purple,very thick] (g) -- (h);

  \draw[color=purple,very thick] (d) -- (i);
  \draw (d) -- (j);
  \draw[color=purple,very thick] (i) -- (j);

  \draw (f) -- (k);
  \draw (f) -- (l);
  \draw (k) -- (l);

  \draw (v) -- (g);
  \draw (v) -- (n);
  \draw (v) -- (m);

  \draw (w) -- (i);
  \draw (w) -- (o);
  \draw (w) -- (p);

  \draw (x) -- (k);
  \draw (x) -- (q);
  \draw (x) -- (r);

  \draw[color=purple,very thick] (h) -- (s);
  \draw[color=purple,very thick] (j) -- (t);
  \draw (l) -- (u);
  
\end{tikzpicture}}
    \caption{Markström graph: the unique smallest cubic planar graph having no 4-cycle and no 8-cycle, but having a 16-cycle. The bold (purple) edges show a 16-cycle.}
    \label{fig}
\end{figure}
\end{document}